\title{Representing Knowledge and Querying Data \\
  using Double-Functorial Semantics}
\author{Michael Lambert
\institute{University of Massachusetts-Boston}
\email{michael.james.lambert@gmail.com}
\and
Evan Patterson
\institute{Topos Institute}
\email{evan@epatters.org}
}
\newcommand{\titlerunning}{Representing Knowledge and Querying Data
  using Double-Functorial Semantics}
\newcommand{\authorrunning}{Lambert and Patterson}
\numberwithin{equation}{section}
\begin{document}
\maketitle

\begin{abstract}
  Category theory offers a mathematical foundation for knowledge representation
  and database systems. Popular existing approaches model a database instance as
  a functor into the category of sets and functions, or as a 2-functor into the
  2-category of sets, relations, and implications. The functional and relational
  models are unified by double functors into the double category of sets,
  functions, relations, and implications. In an accessible, example-driven
  style, we show that the abstract structure of a `double category of relations'
  is a flexible and expressive language in which to represent knowledge, and we
  show how queries on data in the spirit of Codd's relational algebra are
  captured by double-functorial semantics.
\end{abstract}

\section{Introduction}
\label{section:introduction}

Knowledge representation and databases are among the most successful
applications of category theory, supported not only by several decades of
theoretical development
\cite{johnson2002,piessens1995,spivak2012fdm,spivak2012ologs} but also by
substantial practical and industrial deployments
\cite{spivak2015,patterson2022}. The general scheme is that the \emph{ontology}
or \emph{database schema} is a small category, possibly equipped with extra
structure, and \emph{instance data} is a structure-preserving functor out of the
schema into a category such as $\Set$. Under this scheme, knowledge
representation and databases are two sides of the same coin: knowledge
representation emphasizes rich schemas and expressive schema languages
(categorical structure), whereas databases center the data instances and
transformations between them. We will pass seamlessly between the terminologies
of the two fields, as the distinction is artificial.

In Spivak's category-theoretic model of a database \cite{spivak2012fdm}, a
\emph{schema} is nothing more than a small category and an \emph{instance} is a
set-valued functor. With an eye toward knowledge representation, Spivak and Kent
explored a richer language \cite{spivak2012ologs}, defining an \emph{ontology
  log}, or \emph{olog}, to be a finite-limit, finite-colimit sketch and an
\emph{instance} to be a set-valued model of the sketch. In both cases the
language is ``functional,'' as arrows in the schema are interpreted as functions
between sets. Since much technology in knowledge representation, including the
Web Ontology Language (OWL), favors relations over functions, the last author
proposed an alternative notion of \emph{relational olog} \cite{patterson2017},
taking Carboni and Walters' \emph{`bicategories of relations'} as the basic
structure \cite{carboni1987}. An ontology now becomes a small `bicategory of
relations' and an instance is a structure-preserving 2-functor from it into
$\bicat{Rel}$. In other words, morphisms in the schema are now interpreted as
relations; also, there are 2-cells interpreted as implications.

We introduce double-categorical ologs, taking the first author's \emph{`double
  categories of relations'} as the foundational structure \cite{lambert2022}. In
outline, a \emph{double olog} will be a small double category, assumed locally
posetal for simplicity and equipped with certain extra structure, and an
\emph{instance} will be a structure-preserving double functor into $\Rel$, the
double category of sets, functions, relations, and implications. As an obvious
first benefit, this approach recognizes that functions and relations are
fundamental concepts and grants them both first-class status. From the
perspective of categorical logic, a double olog possesses all four elements of a
logic fibered over a type theory \cite{jacobs1999}: its objects are types and
its arrows are terms; fibered over those, its proarrows are predicates and its
cells are judgments or implications. By contrast, a functional olog is missing
predicates and judgments, whereas a relational olog is missing terms.

Using the rich structure of a `double category of relations', queries on the
instance data of a double olog can be formulated internally to the schema,
rather than through external mappings. In Spivak's minimal model of databases
\cite[\S{5.3}]{spivak2012fdm}, queries are regarded as particular cases of the
adjoint triple of data migration functors induced by a schema mapping. This
approach, while mathematically economical, is indirect and requires delicate
analysis to reduce to SQL primitives \cite{spivak2015}. By contrast, the basic
operations of Codd's relational algebra \cite[\S{2.1}]{codd1970}---permutations,
projections, joins---appear directly as operations in a `double category of
relations', namely as restrictions or extensions along symmetries, projections,
or diagonals. Thus, as we show in \cref{section:querying}, queries can be
formulated as abstract relations in the schema and then evaluated by the double
functor defining the instance. As a slogan, we say that \emph{querying is
  double-functorial semantics}.

In this paper, we will explain the features of double ologs in an accessible,
example-driven style, but for the benefit of readers familiar with double
category theory, we briefly state the main technical definitions. Further
background can be found in \cref{section:background}. A \define{`double category
  of relations'} is a locally posetal cartesian equipment satisfying a
discreteness or Frobenius axiom \cite[\mbox{Definition 3.2}]{lambert2022}. We
define a \define{double olog}, or just an \define{olog} for short, to be a small
`double category of relations', which we sometimes also assume to have
tabulators. An \define{instance} of a double olog $\dbl{D}$ is a cartesian,
strict double functor $\dbl{D} \to \Rel$, which must also preserve tabulators
whenever $\dbl{D}$ is assumed to have them.

\paragraph{Acknowledgments} Patterson was supported by the Air Force Office of
Scientific Research (AFOSR) Young Investigator Program (YIP) through Award
FA9550-23-1-0133.

\section{Representing Knowledge}

The two fundamental ingredients of a functional olog \cite{spivak2012ologs} are
concepts (aka, types or objects) and attributes (aka, arrows or morphisms),
schematized for example as
    % https://q.uiver.app/?q=WzAsMixbMCwwLCJcXGZib3h7U2FtIENhcnRlcn0iXSxbMiwwLCJcXGZib3h7cGVyc29ufSJdLFswLDEsIlxcdGV4dHtpcyBhfSJdXQ==
    \[\begin{tikzcd}
      {\fbox{Sam Carter}} && {\fbox{person}}
      \arrow["{\text{is a}}", from=1-1, to=1-3]
    \end{tikzcd},\]
where the domain stands for a concept of \emph{being Sam Carter}, the codomain 
represents the concept of \emph{personhood}, and the arrow expresses that 
personhood is an attribute of Sam Carter.\footnote{All of our examples are inspired by
the television show Stargate SG-1, knowledge of which should enhance the reader's 
enjoyment, if not their understanding, of the paper.} This section will show how
double categories---and in particular cartesian equipments and `double 
categories of relations'---can enhance the expressive power of ologs.

\subsection{Expressing Facts}
\label{sec:facts}

A \define{fact} in an olog is defined by Spivak and Kent \cite{spivak2012ologs}
to be a commutative diagram. In other words, a fact is a statement that a pair
of sequences of attributes have the same composite. Suppose we wish to assert
the fact that Frank Simmons is User 4574. In a functional olog, we could write
  % https://q.uiver.app/?q=WzAsMyxbMCwwLCJcXGZib3h7VXNlciBJRCA0NTc0fSJdLFsyLDAsIlxcZmJveHtGcmFuayBTaW1tb25zfSJdLFsxLDEsIlxcZmJveHtwZXJzb259Il0sWzAsMSwiXFx0ZXh0e2JlbG9uZ3MgdG99Il0sWzAsMiwiXFx0ZXh0e2JlbG9uZ3MgdG8gYX0iLDJdLFsxLDIsIlxcdGV4dHtpcyBhfSJdXQ==
  \[\begin{tikzcd}
    {\fbox{User ID 4574}} && {\fbox{Frank Simmons}} \\
    & {\fbox{person}}
    \arrow["{\text{belongs to}}", from=1-1, to=1-3]
    \arrow["{\text{belongs to a}}"', from=1-1, to=2-2]
    \arrow["{\text{is a}}", from=1-3, to=2-2]
  \end{tikzcd},\]
which says that Frank Simmons is the person to whom the ID `User 4574'
belongs.
% But suppose that we know only that `User 4574' belongs to \emph{someone} but
% not to whom. Then there is sort of a story in the olog. From this point of
% view, we're saying that User 4574 is a person \emph{and} that person is Frank
% Simmons.

However, something seems wrong about encoding individual people as types or
objects in a categorical structure. Ordinarily, in first-order logic and type
theory, individual entities are specified in a signature as constants of a
certain type. In category theory, constants are interpreted as \emph{global
  elements}, that is, as arrows $c\colon 1\to A$ from a chosen terminal object to the
type $A$. Following this approach, we would rather fix a terminal object and
posit two individual constants.

We start with an olog asserting that every System Lord has a First Prime,
foremost among Jaffa warriors belonging to that System Lord. This could be 
expressed by a function
    % https://q.uiver.app/?q=WzAsMixbMCwwLCJcXGZib3h7c3lzdGVtIGxvcmR9Il0sWzIsMCwiXFxmYm94e2phZmZhfSJdLFswLDEsIlxcdGV4dHtmaXJzdCBwcmltZX0iXV0=
    \[\begin{tikzcd}
      {\fbox{system lord}} && {\fbox{jaffa}}
      \arrow["{\text{first prime}}", from=1-1, to=1-3]
    \end{tikzcd}.\]
Let's posit two individuals, Teal'c and Apophis, whom we include in the
olog as constants
    % https://q.uiver.app/?q=WzAsNCxbMCwxLCJcXGZib3h7c3lzdGVtIGxvcmR9Il0sWzIsMSwiXFxmYm94e2phZmZhfSJdLFswLDAsIjEiXSxbMiwwLCIxIl0sWzIsMCwiXFx0ZXh0e0Fwb3BoaXN9IiwyXSxbMywxLCJcXHRleHR7VGVhbCdjfSJdXQ==
    \[\begin{tikzcd}
      1 && 1 \\
      {\fbox{system lord}} && {\fbox{jaffa}}
      \arrow["{\text{Apophis}}"', from=1-1, to=2-1]
      \arrow["{\text{Teal'c}}", from=1-3, to=2-3]
    \end{tikzcd}.\]
The fact that Teal'c is (or rather was) the First Prime of Apophis can be 
expressed as above by a commutative diagram
    % https://q.uiver.app/?q=WzAsMyxbMCwxLCJcXGZib3h7c3lzdGVtIGxvcmR9Il0sWzIsMSwiXFxmYm94e2phZmZhfSJdLFsxLDAsIjEiXSxbMiwwLCJcXHRleHR7QXBvcGhpc30iLDJdLFswLDEsIlxcdGV4dHtmaXJzdCBwcmltZX0iLDJdLFsyLDEsIlxcdGV4dHtUZWFsJ2N9Il1d
    \[\begin{tikzcd}
      & 1 \\
      {\fbox{system lord}} && {\fbox{jaffa}}
      \arrow["{\text{Apophis}}"', from=1-2, to=2-1]
      \arrow["{\text{first prime}}"', from=2-1, to=2-3]
      \arrow["{\text{Teal'c}}", from=1-2, to=2-3]
    \end{tikzcd}.\]

Now suppose we'd like to take a more nuanced view of the relationship ``first
prime.'' The phrasing above carries the assumption that to each System
Lord we can assign a unique First Prime, which is arguably true at a fixed
moment in time. But what if at that moment in time a System Lord is
between First Primes, so that in fact we have have a partial function? Or
perhaps we would like our data to include all of the First Primes a System Lord
has ever had (a one-to-many relationship), or similarly we'd like to consider
all of the System Lords a given Jaffa has served as First Prime. To accommodate
these possibilities, we will alter the olog to specify a \emph{relation} of
being First Prime, along with two constants:

    % https://q.uiver.app/?q=WzAsNCxbMCwxLCJcXGZib3h7c3lzdGVtIGxvcmR9Il0sWzIsMSwiXFxmYm94e2phZmZhfSJdLFswLDAsIjEiXSxbMiwwLCIxIl0sWzIsMCwiXFx0ZXh0e0Fwb3BoaXN9IiwyXSxbMCwxLCJcXHRleHR7Zmlyc3QgcHJpbWV9IiwyLHsic3R5bGUiOnsiYm9keSI6eyJuYW1lIjoiYmFycmVkIn19fV0sWzMsMSwiXFx0ZXh0e1RlYWwnY30iXV0=
    \[\begin{tikzcd}
      1 && 1 \\
      {\fbox{system lord}} && {\fbox{jaffa}}
      \arrow["{\text{Apophis}}"', from=1-1, to=2-1]
      \arrow["{\text{first prime}}"', "\shortmid"{marking}, from=2-1, to=2-3]
      \arrow["{\text{Teal'c}}", from=1-3, to=2-3]
    \end{tikzcd}.\]
Restricting along the Apophis constant creates the cell
    % https://q.uiver.app/#q=WzAsNCxbMCwxLCJcXGZib3h7c3lzdGVtIGxvcmR9Il0sWzMsMSwiXFxmYm94e2phZmZhfSJdLFswLDAsIjEiXSxbMywwLCJcXGZib3h7amFmZmF9Il0sWzIsMCwiXFx0ZXh0e0Fwb3BoaXN9IiwyXSxbMCwxLCJcXHRleHR7Zmlyc3QgcHJpbWV9IiwyLHsic3R5bGUiOnsiYm9keSI6eyJuYW1lIjoiYmFycmVkIn19fV0sWzMsMSwiMSJdLFsyLDMsIlxcdGV4dHtpcyBmaXJzdCBwcmltZSBvZiBBcG9waGlzfSIsMCx7InN0eWxlIjp7ImJvZHkiOnsibmFtZSI6ImJhcnJlZCJ9fX1dLFs3LDUsIlxcbWF0aHJte3Jlc30iLDEseyJzaG9ydGVuIjp7InNvdXJjZSI6MjAsInRhcmdldCI6MjB9LCJzdHlsZSI6eyJib2R5Ijp7Im5hbWUiOiJub25lIn0sImhlYWQiOnsibmFtZSI6Im5vbmUifX19XV0=
    \[\begin{tikzcd}
      1 &&& {\fbox{jaffa}} \\
      {\fbox{system lord}} &&& {\fbox{jaffa}}
      \arrow["{\text{Apophis}}"', from=1-1, to=2-1]
      \arrow[""{name=0, anchor=center, inner sep=0}, "{\text{first prime}}"', "\shortmid"{marking}, from=2-1, to=2-4]
      \arrow["1", from=1-4, to=2-4]
      \arrow[""{name=1, anchor=center, inner sep=0}, "{\text{is first prime of Apophis}}", "\shortmid"{marking}, from=1-1, to=1-4]
      \arrow["{\mathrm{restr}}"{description}, draw=none, from=1, to=0]
    \end{tikzcd}\]
whose image under instance data valued in $\Rel$ would amount to a list of all 
of Jaffa having served Apophis as first prime. How can we express the fact that 
Teal'c is (or was at some point) a first prime of Apophis? This is a matter of 
asking that another restriction factor through (or rather be equal to) the 
truth value $\top$, which is the terminal object in the external
hom-category on $1$. That is, we first form the restriction along both constants
    % https://q.uiver.app/?q=WzAsNCxbMCwxLCJcXGZib3h7c3lzdGVtIGxvcmR9Il0sWzMsMSwiXFxmYm94e2phZmZhfSJdLFswLDAsIjEiXSxbMywwLCIxIl0sWzIsMCwiXFx0ZXh0e0Fwb3BoaXN9IiwyXSxbMCwxLCJcXHRleHR7Zmlyc3QgcHJpbWV9IiwyLHsic3R5bGUiOnsiYm9keSI6eyJuYW1lIjoiYmFycmVkIn19fV0sWzMsMSwiXFx0ZXh0e1RlYWwnY30iXSxbMiwzLCJcXHRleHR7QXBvcGhpcyBoYXMgVGVhbCdjIGFzIEZpcnN0IFByaW1lfSIsMCx7InN0eWxlIjp7ImJvZHkiOnsibmFtZSI6ImJhcnJlZCJ9fX1dLFs3LDUsIlxcdGV4dHtyZXN0cn0iLDEseyJzaG9ydGVuIjp7InNvdXJjZSI6MjAsInRhcmdldCI6MjB9LCJzdHlsZSI6eyJib2R5Ijp7Im5hbWUiOiJub25lIn0sImhlYWQiOnsibmFtZSI6Im5vbmUifX19XV0=
    \[\begin{tikzcd}
      1 &&& 1 \\
      {\fbox{system lord}} &&& {\fbox{jaffa}}
      \arrow["{\text{Apophis}}"', from=1-1, to=2-1]
      \arrow[""{name=0, anchor=center, inner sep=0}, "{\text{first prime}}"', "\shortmid"{marking}, from=2-1, to=2-4]
      \arrow["{\text{Teal'c}}", from=1-4, to=2-4]
      \arrow[""{name=1, anchor=center, inner sep=0}, "{\text{Apophis has Teal'c as First Prime}}", "\shortmid"{marking}, from=1-1, to=1-4]
      \arrow["{\text{restr}}"{description}, draw=none, from=1, to=0]
    \end{tikzcd}.\]
Denoting our double olog by $\dbl{D}$, we have the truth value $\top\colon 1 \proto 1$,
the terminal object of $\dbl{D}(1,1)$. The fact that Apophis
has (or had) Teal'c as a First Prime is just asking that the restricted proarrow
above be equal to the local terminal:
    % https://q.uiver.app/?q=WzAsNCxbMCwwLCIxIl0sWzUsMCwiMSJdLFs2LDAsIj0iXSxbNywwLCJcXHRvcCJdLFswLDEsIlxcdGV4dHtBcG9waGlzIGhhcyBUZWFsJ2MgYXMgRmlyc3QgUHJpbWV9IiwwLHsic3R5bGUiOnsiYm9keSI6eyJuYW1lIjoiYmFycmVkIn19fV1d
    \[\begin{tikzcd}
      1 &&&&& 1 & {=} & \top
      \arrow["{\text{Apophis has Teal'c as First Prime}}", "\shortmid"{marking}, from=1-1, to=1-6]
    \end{tikzcd}.\]

This assertion cannot be so naturally expressed in either functional or
relational ologs.
In a functional olog, we must treat the relation of being First Prime as
functional, which is too restrictive. On the other hand, in a relational olog,
we can only introduce a constant through a relation $c: 1 \proto A$ along with
side equations making the relation into a \emph{map} in the `bicategory of
relations' \cite[\mbox{Definition 1.5} and \mbox{Lemma 2.5}]{carboni1987}. We
then rely on the semantics of the olog in $\bicat{Rel}$ to ensure that the
relation $c$ is interpreted as (the graph of) a function. In a double olog,
we work natively with both functions and relations as is convenient.

\subsection{Creating Types Using Tabulators}

Here is an another double olog, expressing a few relationships between persons,
symbiotes, and types of symbiotes, namely, Goa'uld and Tok'ra:
  % https://q.uiver.app/?q=WzAsNCxbMCwwLCJcXGZib3h7cGVyc29ufSJdLFsyLDAsIlxcZmJveHt0b2sncmF9Il0sWzIsMiwiXFxmYm94e3N5bWJpb3RlfSJdLFswLDIsIlxcZmJveHtnb2EndWxkfSJdLFsxLDAsIlxcdGV4dHtob3N0ZWR9IiwyLHsib2Zmc2V0IjoyLCJzdHlsZSI6eyJib2R5Ijp7Im5hbWUiOiJiYXJyZWQifX19XSxbMCwxLCJcXHRleHR7aG9zdH0iLDIseyJvZmZzZXQiOjIsInN0eWxlIjp7ImJvZHkiOnsibmFtZSI6ImJhcnJlZCJ9fX1dLFszLDAsIlxcdGV4dHtob3N0ZWR9IiwwLHsib2Zmc2V0IjotMiwic3R5bGUiOnsiYm9keSI6eyJuYW1lIjoiYmFycmVkIn19fV0sWzAsMywiXFx0ZXh0e2hvc3R9IiwwLHsib2Zmc2V0IjotMiwic3R5bGUiOnsiYm9keSI6eyJuYW1lIjoiYmFycmVkIn19fV0sWzMsMiwiXFx0ZXh0e2lzfSIsMl0sWzEsMiwiXFx0ZXh0e2lzfSIsMl0sWzIsMCwiXFx0ZXh0e2hvc3RlZH0iLDAseyJsYWJlbF9wb3NpdGlvbiI6NDAsIm9mZnNldCI6LTIsInNob3J0ZW4iOnsic291cmNlIjoxMH0sInN0eWxlIjp7ImJvZHkiOnsibmFtZSI6ImJhcnJlZCJ9fX1dLFswLDIsIlxcdGV4dHtob3N0fSIsMCx7ImxhYmVsX3Bvc2l0aW9uIjo2MCwib2Zmc2V0IjotMiwic2hvcnRlbiI6eyJzb3VyY2UiOjEwfSwic3R5bGUiOnsiYm9keSI6eyJuYW1lIjoiYmFycmVkIn19fV1d
  \[\begin{tikzcd}
    {\fbox{person}} && {\fbox{tok'ra}} \\
    \\
    {\fbox{goa'uld}} && {\fbox{symbiote}}
    \arrow["{\text{hosted}}"', "\shortmid"{marking}, shift right=2, from=1-3, to=1-1]
    \arrow["{\text{host}}"', "\shortmid"{marking}, shift right=2, from=1-1, to=1-3]
    \arrow["{\text{hosted}}", "\shortmid"{marking}, shift left=2, from=3-1, to=1-1]
    \arrow["{\text{host}}", "\shortmid"{marking}, shift left=2, from=1-1, to=3-1]
    \arrow["{\text{is}}"', from=3-1, to=3-3]
    \arrow["{\text{is}}"', from=1-3, to=3-3]
    \arrow["{\text{hosted}}"{pos=0.4}, "\shortmid"{marking}, shift left=2, shorten <=6pt, from=3-3, to=1-1]
    \arrow["{\text{host}}"{pos=0.6}, "\shortmid"{marking}, shift left=2, shorten <=6pt, from=1-1, to=3-3]
  \end{tikzcd}.\]
Every person is a potential symbiote host. But notice that the relation
$\text{host}\colon \fbox{person} \proto \fbox{symbiote}$ and those valued in
$\fbox{tok'ra}$ and $\fbox{goa'uld}$ are not functional since not every person is
in fact a host. An instance of ``$\text{hosted}$'' is generally many-to-one
since many symbiotes have at least one host. It should be asked in equations or 
as a definition that $\text{host}^\dagger = \text{hosted}$, where the dagger
$(-)^\dagger$ denotes the converse or opposite relation. All Tok'ra are
symbiotes; so are Goa'uld. Thus, the two arrows labeled `$\text{is}$' are
genuinely functional, indicating something like subtypes.

Typically a Goa'uld leaves its host only upon the death of the host, and if
the Goa'uld is forcibly removed, then the host will die due to release of
toxins. So, simplifying slightly, we can make the assumption that each person
is a host at most once. This is expressed by imposing the equation
  % https://q.uiver.app/#q=WzAsNCxbMCwwLCJcXGlkIl0sWzEsMCwiXFxmYm94e2dvYSd1bGR9Il0sWzMsMCwiXFxmYm94e3BlcnNvbn0iXSxbNSwwLCJcXGZib3h7Z29hJ3VsZH0iXSxbMSwyLCJcXHRleHR7aG9zdGVkfSIsMCx7InN0eWxlIjp7ImJvZHkiOnsibmFtZSI6ImJhcnJlZCJ9fX1dLFsyLDMsIlxcdGV4dHtob3N0fSIsMCx7InN0eWxlIjp7ImJvZHkiOnsibmFtZSI6ImJhcnJlZCJ9fX1dLFswLDEsIj0iLDEseyJzdHlsZSI6eyJib2R5Ijp7Im5hbWUiOiJub25lIn0sImhlYWQiOnsibmFtZSI6Im5vbmUifX19XV0=
  \[\begin{tikzcd}
    \id & {\fbox{goa'uld}} && {\fbox{person}} && {\fbox{goa'uld}}
    \arrow["{\text{hosted}}", "\shortmid"{marking}, from=1-2, to=1-4]
    \arrow["{\text{host}}", "\shortmid"{marking}, from=1-4, to=1-6]
    \arrow["{=}"{description}, draw=none, from=1-1, to=1-2]
  \end{tikzcd}\]
asserting that the host and hosted relation compose to the identity
on $\fbox{goa'uld}$. In other words, the relation $\text{host}$ is a \emph{partial map}.
\cref{prop:PartialMapsAreMonic} shows that if the double olog $\dbl{D}$ were
taken to be a unit-pure `double category of relations' with tabulators, then
this assumption  would imply that the tabulator of the host-relation, namely
$\fbox{host-goa'uld pairs}$, is the apex of a span
% https://q.uiver.app/#q=WzAsMyxbMSwwLCJcXGZib3h7aG9zdC1nb2EndWxkIHBhaXJzfSJdLFsyLDAsIlxcZmJveHtnb2EndWxkfSJdLFswLDAsIlxcZmJveHtwZXJzb259Il0sWzAsMSwiYyJdLFswLDIsImQiLDIseyJzdHlsZSI6eyJ0YWlsIjp7Im5hbWUiOiJtb25vIn19fV1d
\[\begin{tikzcd}
	{\fbox{person}} & {\fbox{host-goa'uld pairs}} & {\fbox{goa'uld}}
	\arrow["c", from=1-2, to=1-3]
	\arrow["d"', tail, from=1-2, to=1-1]
\end{tikzcd}\]
where $d$ is monic, that is, a partial map in the underlying category
$\dbl{D}_0$ of objects and arrows.
% Similarly for Tok'ra and symbiotes generally.
If the tabulators in $\dbl{D}$ are also assumed to be \emph{strong}, then the
process is reversible, in the sense that a partial map (a span with monic left 
leg) would induce a partial morphism in the relational sense. Thus, under
suitable assumptions, one can pass back and forth between the two possible
representations of a partial map in a double olog.

Even without these extra conditions, the mere presence of
extensions and tabulators increases the expressive power of double ologs
considerably. For example, we can create a type of persons who are or have been
hosts. First, create a proposition for the notion \emph{has hosted} as an 
extension cell
  % https://q.uiver.app/?q=WzAsNCxbMCwwLCJcXGZib3h7cGVyc29ufSJdLFsyLDAsIlxcZmJveHtzeW1iaW90ZX0iXSxbMCwxLCJcXGZib3h7cGVyc29ufSJdLFsyLDEsIjEiXSxbMCwxLCJcXHRleHR7aG9zdH0iLDAseyJzdHlsZSI6eyJib2R5Ijp7Im5hbWUiOiJiYXJyZWQifX19XSxbMCwyLCI9IiwyXSxbMSwzLCIhIl0sWzIsMywiXFx0ZXh0e2hhcyBob3N0ZWR9IiwyLHsic3R5bGUiOnsiYm9keSI6eyJuYW1lIjoiYmFycmVkIn19fV0sWzQsNywiXFx0ZXh0e2V4dH0iLDEseyJzaG9ydGVuIjp7InNvdXJjZSI6MjAsInRhcmdldCI6MjB9LCJzdHlsZSI6eyJib2R5Ijp7Im5hbWUiOiJub25lIn0sImhlYWQiOnsibmFtZSI6Im5vbmUifX19XV0=
  \[\begin{tikzcd}
    {\fbox{person}} && {\fbox{symbiote}} \\
    {\fbox{person}} && 1
    \arrow[""{name=0, anchor=center, inner sep=0}, "{\text{host}}", "\shortmid"{marking}, from=1-1, to=1-3]
    \arrow["{=}"', from=1-1, to=2-1]
    \arrow["{!}", from=1-3, to=2-3]
    \arrow[""{name=1, anchor=center, inner sep=0}, "{\text{has hosted}}"', "\shortmid"{marking}, from=2-1, to=2-3]
    \arrow["{\text{ext}}"{description}, draw=none, from=0, to=1]
  \end{tikzcd}.\]
This is interpreted as the image of the host relation in the type of persons.
Then extract the comprehension or subobject classified by this proposition by
taking its tabulator. This gives us a type of hosts:
  % https://q.uiver.app/?q=WzAsMyxbMCwxLCJcXGZib3h7cGVyc29ufSJdLFsyLDEsIjEiXSxbMSwwLCJcXGZib3h7aG9zdH0iXSxbMCwxLCJcXHRleHR7aGFzIGhvc3RlZH0iLDIseyJzdHlsZSI6eyJib2R5Ijp7Im5hbWUiOiJiYXJyZWQifX19XSxbMiwwLCJcXHRleHR7aXN9IiwyXSxbMiwxLCIhIl0sWzIsMywiXFx0ZXh0e3RhYn0iLDEseyJzaG9ydGVuIjp7InRhcmdldCI6MjB9LCJzdHlsZSI6eyJib2R5Ijp7Im5hbWUiOiJub25lIn0sImhlYWQiOnsibmFtZSI6Im5vbmUifX19XV0=
  \[\begin{tikzcd}
    & {\fbox{host}} \\
    {\fbox{person}} && 1
    \arrow[""{name=0, anchor=center, inner sep=0}, "{\text{has hosted}}"', "\shortmid"{marking}, from=2-1, to=2-3]
    \arrow["{\text{is}}"', from=1-2, to=2-1]
    \arrow["{!}", from=1-2, to=2-3]
    \arrow["{\text{tab}}"{description}, draw=none, from=1-2, to=0]
  \end{tikzcd}.\]
Likewise, there is a type of hosted symbiotes constructed by the extension on
the left followed by the tabulator on the right:
\begin{equation*}
  % https://q.uiver.app/#q=WzAsNCxbMCwwLCJcXGZib3h7cGVyc29ufSJdLFsyLDAsIlxcZmJveHtzeW1iaW90ZX0iXSxbMCwxLCIxIl0sWzIsMSwiXFxmYm94e3N5bWJpb3RlfSJdLFswLDIsIiEiLDJdLFswLDEsIlxcdGV4dHtob3N0fSIsMCx7InN0eWxlIjp7ImJvZHkiOnsibmFtZSI6ImJhcnJlZCJ9fX1dLFsxLDMsIj0iXSxbMiwzLCJcXHRleHR7aGFzIGJlZW4gaG9zdGVkfSIsMix7InN0eWxlIjp7ImJvZHkiOnsibmFtZSI6ImJhcnJlZCJ9fX1dLFs1LDcsIlxcdGV4dHtleHR9IiwxLHsic2hvcnRlbiI6eyJzb3VyY2UiOjIwLCJ0YXJnZXQiOjIwfSwic3R5bGUiOnsiYm9keSI6eyJuYW1lIjoibm9uZSJ9LCJoZWFkIjp7Im5hbWUiOiJub25lIn19fV1d
  \begin{tikzcd}
    {\fbox{person}} && {\fbox{symbiote}} \\
    1 && {\fbox{symbiote}}
    \arrow["{!}"', from=1-1, to=2-1]
    \arrow[""{name=0, anchor=center, inner sep=0}, "{\text{host}}", "\shortmid"{marking}, from=1-1, to=1-3]
    \arrow["{=}", from=1-3, to=2-3]
    \arrow[""{name=1, anchor=center, inner sep=0}, "{\text{has been hosted}}"', "\shortmid"{marking}, from=2-1, to=2-3]
    \arrow["{\text{ext}}"{description}, draw=none, from=0, to=1]
  \end{tikzcd}
  \quad\leadsto\quad
  % https://q.uiver.app/#q=WzAsMyxbMSwwLCJcXGZib3h7aG9zdGVkIHN5bWJpb3RlfSJdLFswLDEsIjEiXSxbMiwxLCJcXGZib3h7c3ltYmlvdGV9Il0sWzAsMSwiISIsMl0sWzEsMiwiXFx0ZXh0e2hhcyBiZWVuIGhvc3RlZH0iLDIseyJzdHlsZSI6eyJib2R5Ijp7Im5hbWUiOiJiYXJyZWQifX19XSxbMCwyLCJcXHRleHR7aXN9Il0sWzAsNCwiXFx0ZXh0e3RhYn0iLDAseyJvZmZzZXQiOjEsInNob3J0ZW4iOnsidGFyZ2V0IjoyMH0sInN0eWxlIjp7ImJvZHkiOnsibmFtZSI6Im5vbmUifSwiaGVhZCI6eyJuYW1lIjoibm9uZSJ9fX1dXQ==
  \begin{tikzcd}
    & {\fbox{hosted symbiote}} \\
    1 && {\fbox{symbiote}}
    \arrow["{!}"', from=1-2, to=2-1]
    \arrow[""{name=0, anchor=center, inner sep=0}, "{\text{has been hosted}}"', "\shortmid"{marking}, from=2-1, to=2-3]
    \arrow["{\text{is}}", from=1-2, to=2-3]
    \arrow["{\text{tab}}", shift right, draw=none, from=1-2, to=0]
  \end{tikzcd}.
\end{equation*}
The arrow on the right again represents a kind of subtype since there are
symbiotes such as queens who are not always hosted and symbiotes such as
prim'ta who are never hosted for whatever reason.
In short, tabulators enable propositions, and relations generally, to be reified 
as subtypes of the original types.

\subsection{Forming Conjunctions Using Local Products}
\label{subsection:conjunction}

What if we want to assert as a fact that two propositions both hold or that they
are equal? That can be done using \emph{local products}
(\cref{subsection:Appendixlocalproducts}).

Let's look again at an example. Consider the olog with four relations:
    % https://q.uiver.app/?q=WzAsNixbMCwwLCJcXGZib3h7cGVyc29ufSJdLFsxLDAsIlxcZmJveHt0ZWFtfSJdLFszLDAsIlxcZmJveHtyZW1pdH0iXSxbMCwxLCJcXGZib3h7cGVyc29ufSJdLFsyLDEsIlxcZmJveHtza2lsbH0iXSxbMywxLCJcXGZib3h7cmVtaXR9Il0sWzAsMSwiXFx0ZXh0e29ufSIsMCx7InN0eWxlIjp7ImJvZHkiOnsibmFtZSI6ImJhcnJlZCJ9fX1dLFsxLDIsIlxcdGV4dHthc3NpZ25lZH0iLDAseyJzdHlsZSI6eyJib2R5Ijp7Im5hbWUiOiJiYXJyZWQifX19XSxbMyw0LCJcXHRleHR7ZXhwZXJ0aXNlfSIsMCx7InN0eWxlIjp7ImJvZHkiOnsibmFtZSI6ImJhcnJlZCJ9fX1dLFs0LDUsIlxcdGV4dHtzZXJ2ZXN9IiwwLHsic3R5bGUiOnsiYm9keSI6eyJuYW1lIjoiYmFycmVkIn19fV1d
    \[\begin{tikzcd}
      {\fbox{person}} & {\fbox{team}} && {\fbox{remit}} \\
      {\fbox{person}} && {\fbox{skill}} & {\fbox{remit}}
      \arrow["{\text{on}}", "\shortmid"{marking}, from=1-1, to=1-2]
      \arrow["{\text{assigned}}", "\shortmid"{marking}, from=1-2, to=1-4]
      \arrow["{\text{expertise}}", "\shortmid"{marking}, from=2-1, to=2-3]
      \arrow["{\text{serves}}", "\shortmid"{marking}, from=2-3, to=2-4]
    \end{tikzcd}.\]
A fairly complex hypothetical underlies this olog,
captured by the question of whether the expertise of given persons serves the
remit of the teams of which they are members. We could require by fiat an 
equality between these objects in the appropriate hom-category. In some sense 
this \emph{should be the case} if the teams have been formed correctly.
But perhaps that's not necessarily the case, and we'd rather ask about
\emph{the extent to which} our question is answered in the affirmative. We can
do this using a local product, which, in the example at hand, looks like
    % https://q.uiver.app/?q=WzAsNCxbMCwwLCJcXGZib3h7cGVyc29ufSJdLFs0LDAsIlxcZmJveHtyZW1pdH0iXSxbMCwxLCJcXGZib3h7cGVyc29ufVxcdGltZXMgXFxmYm94e3BlcnNvbn0iXSxbNCwxLCJcXGZib3h7cmVtaXR9XFx0aW1lcyBcXGZib3h7cmVtaXR9Il0sWzIsMywiXFx0ZXh0e3NlcnZlc31cXGNpcmNcXHRleHR7ZXhwZXJ0aXNlfVxcdGltZXMgXFx0ZXh0e2Fzc2lnbmVkfVxcY2lyY1xcdGV4dHtvbn0iLDIseyJzdHlsZSI6eyJib2R5Ijp7Im5hbWUiOiJiYXJyZWQifX19XSxbMCwxLCJcXHRleHR7c2VydmVzfVxcY2lyY1xcdGV4dHtleHBlcnRpc2V9XFx3ZWRnZVxcdGV4dHthc3NpZ25lZH1cXGNpcmNcXHRleHR7b259IiwwLHsic3R5bGUiOnsiYm9keSI6eyJuYW1lIjoiYmFycmVkIn19fV0sWzAsMiwiXFxEZWx0YSIsMl0sWzEsMywiXFxEZWx0YSJdLFs1LDQsIlxcdGV4dHtyZXN0cn0iLDEseyJzaG9ydGVuIjp7InNvdXJjZSI6MjAsInRhcmdldCI6MjB9LCJzdHlsZSI6eyJib2R5Ijp7Im5hbWUiOiJub25lIn0sImhlYWQiOnsibmFtZSI6Im5vbmUifX19XV0=
    \[\begin{tikzcd}
      {\fbox{person}} &&&& {\fbox{remit}} \\
      {\fbox{person}\times \fbox{person}} &&&& {\fbox{remit}\times \fbox{remit}}
      \arrow[""{name=0, anchor=center, inner sep=0}, "{\text{serves}\circ\text{expertise}\times \text{assigned}\circ\text{on}}"', "\shortmid"{marking}, from=2-1, to=2-5]
      \arrow[""{name=1, anchor=center, inner sep=0}, "{\text{serves}\circ\text{expertise}\wedge\text{assigned}\circ\text{on}}", "\shortmid"{marking}, from=1-1, to=1-5]
      \arrow["\Delta"', from=1-1, to=2-1]
      \arrow["\Delta", from=1-5, to=2-5]
      \arrow["{\text{restr}}"{description}, draw=none, from=1, to=0]
    \end{tikzcd}.\]
Let's postulate some concrete data to see what this local product looks like.
Define a data instance $\dbl{D}\to\Rel$ for our double olog $\dbl{D}$ by
assigning the types to the data columns
    \[
      \begin{minipage}{1.25in}
        \begin{tabular}{| l |}
          \hline {\bf person}\\
          \hline Carter\\
          \hline Connor\\
          \hline Jackson\\
          \hline Kovacek\\
          \hline Teal'c\\
          \hline
        \end{tabular}  
      \end{minipage}
      \begin{minipage}{1.25in}
        \begin{tabular}{| l |}
          \hline {\bf team}\\
          \hline SG1\\
          \hline SG3\\
          \hline SG9\\
          \hline SG11\\
          \hline
        \end{tabular}  
      \end{minipage}
      \begin{minipage}{1.5in}
        \begin{tabular}{| l |}
          \hline {\bf skill}\\
          \hline archaeology\\
          \hline anthropology\\
          \hline astrophysics\\
          \hline engineering\\
          \hline law\\
          \hline philology\\
          \hline tracking\\
          \hline weapons\\
          \hline
        \end{tabular}  
      \end{minipage}
      \begin{minipage}{2in}
        \begin{tabular}{| l |}
          \hline {\bf remit}\\
          \hline combat\\
          \hline diplomacy\\
          \hline exploration\\
          \hline science \& engineering\\
          \hline search \& rescue\\
          \hline
        \end{tabular}\;\;.
      \end{minipage}
    \]
The first two relations are then the following:
    \[
      \begin{minipage}{2in}
        \begin{tabular}{| l | l |}
          \hline\multicolumn{2}{| c |}{\bf on}\\
          \hline {\bf person} & {\bf team}\\
          \hline Carter & SG1\\
          \hline Connor & SG9\\
          \hline Jackson & SG1\\
          \hline Kovacek & SG9\\
          \hline Teal'c & SG1\\
          \hline Teal'c & SG3\\
          \hline
        \end{tabular}  
      \end{minipage}
      \begin{minipage}{2.5in}
        \begin{tabular}{| l | l |}
          \hline\multicolumn{2}{| c |}{\bf assigned}\\
          \hline {\bf team} & {\bf remit}\\
          \hline SG1 & exploration\\
          \hline SG3 & combat\\
          \hline SG3 & search \& rescue\\
          \hline SG9 & diplomacy\\
          \hline SG11 & science \& engineering\\
          \hline
        \end{tabular}  
      \end{minipage}
    \]
and the remaining two are the big tables filled with all of Daniel Jackson's PhDs:
    \[
      \begin{minipage}{2.5in}
      \begin{tabular}{| l | l |}
        \hline\multicolumn{2}{| c |}{\bf expertise}\\
        \hline {\bf person} & {\bf skill}\\
        \hline Carter & astrophysics\\
        \hline Carter & weapons\\
        \hline Connor & engineering\\
        \hline Jackson & anthropology\\
        \hline Jackson & archaeology\\
        \hline Jackson & philology\\
        \hline Kovacek & law\\
        \hline Teal'c & weapons\\
        \hline Teal'c & tracking\\
        \hline
      \end{tabular}
      \end{minipage}
      \begin{minipage}{3in}
        \begin{tabular}{| l | l |}
          \hline\multicolumn{2}{| c |}{\bf serves}\\
          \hline {\bf skill} & {\bf remit}\\
          \hline astrophysics & exploration\\
          \hline anthropology & exploration\\
          \hline archaeology & exploration\\
          \hline engineering & science \& engineering\\
          \hline law & diplomacy\\
          \hline philology & exploration\\
          \hline weapons & combat\\
          \hline weapons & exploration\\
          \hline tracking & search \& rescue\\
          \hline
        \end{tabular}\;\;.  
      \end{minipage}
    \]
\emph{None} of these relations are functional. Perhaps some of the
associations in the $\textbf{serves}$ relation are debatable and certainly we
haven't included all skills possessed by all the individuals in the $\textbf
{person}$ table, but the point is just to show how local products work.
One can trace through the
construction as a restriction first in $\dbl{D}$ and then in $\Rel$. But the
local products formula in \cref{equation:localproductsinRelformula}
above makes the computation easy. The local product is just the set of all
$\fbox{person}$-$\fbox{remit}$ pairs related under \emph{both} composites
$\text{serves}\circ\text{expertise}$ and $\text{assigned}\circ\text{on}$. To be
explicit, we can compute the two composites as
  \[
    \begin{minipage}{3in}
    \begin{tabular}{| l | l |}
      \hline\multicolumn{2}{| c |}{\bf serves $\circ$ expertise}\\
      \hline {\bf person} & {\bf remit}\\
      \hline Carter & exploration\\
      \hline Carter & combat\\
      \hline Connor & science \& engineering\\
      \hline Jackson & exploration\\
      \hline Kovacek & diplomacy\\
      \hline Teal'c & combat\\
      \hline Teal'c & search \& rescue\\
      \hline
    \end{tabular}
  \end{minipage}
  \begin{minipage}{2.5in}
    \begin{tabular}{| l | l |}
      \hline\multicolumn{2}{| c |}{\bf assigned $\circ$ on}\\
      \hline {\bf person} & {\bf remit}\\
      \hline Carter & exploration\\
      \hline Connor & diplomacy\\
      \hline Jackson & exploration\\
      \hline Kovacek & diplomacy\\
      \hline Teal'c & exploration\\
      \hline Teal'c & combat\\
      \hline Teal'c & search \& rescue\\
      \hline
    \end{tabular}\;\;.
  \end{minipage}
  \]
In each case we've used the formula for composition of relations using 
existential quantification. Notice that despite Laurence Connor's training in 
aerospace engineering, he serves on the diplomacy team SG9, while Sam Carter's
weapons training would qualify her for combat, but she is instead a member
of exploration team SG1. Thus, the local product is the table 
excluding these two entries, namely
  \[
    \begin{tabular}{| l | l |}
      \hline\multicolumn{2}{| c |}{\bf serves $\circ$ expertise $\wedge$ assigned $\circ$ on}\\
      \hline {\bf person} & {\bf remit}\\
      \hline Carter & exploration\\
      \hline Jackson & exploration\\
      \hline Kovacek & diplomacy\\
      \hline Teal'c & combat\\
      \hline Teal'c & search \& rescue\\
      \hline
    \end{tabular}\;\;.
  \]
The table thus gives precisely those people whose expertise serves the
purpose of the teams to which they are assigned as well as the particular 
remit. 

\section{Querying Data}
\label{section:querying}

The conceit of this section is that `double categories of relations' provide a
simple and satisfying framework for querying data. In the approach based on
functorial data migration \cite{spivak2012fdm}, queries are handled externally
through the device of adjoint functors between copresheaf categories and
regarded as a special case of data migration. We will see here that queries can
be expressed internally to the schema using the operations available in a
`double category of relations', or more generally in a cartesian equipment. As a
slogan: \emph{querying is double-functorial semantics}.

\subsection{Select}

The select operation is perhaps the easiest to describe in our
double-categorical formalism. Selection is performed by taking an extension
along projection morphisms.

Suppose we are describing the attributes belonging to a concept of a mission. In
a functional olog \cite{spivak2012ologs}, we would schematize the concept with
functional relations indicated by the arrows identifying the leaf tables, such
as:
    % https://q.uiver.app/?q=WzAsNSxbMiwwLCJcXGZib3h7bWlzc2lvbn0iXSxbMCwxLCJcXGZib3h7ZGF0ZX0iXSxbMSwxLCJcXGZib3h7bG9jYXRpb259Il0sWzMsMSwiXFxmYm94e3B1cnBvc2V9Il0sWzQsMSwiXFxmYm94e3RlYW19Il0sWzAsMSwiXFx0ZXh0e29ufSIsMl0sWzAsMiwiXFx0ZXh0e2F0fSJdLFswLDMsIlxcdGV4dHtmb3J9IiwyXSxbMCw0LCJcXHRleHR7YXNzaWduZWQgdG99Il1d
    \[\begin{tikzcd}
      && {\fbox{mission}} \\
      {\fbox{date}} & {\fbox{location}} && {\fbox{purpose}} & {\fbox{team}}
      \arrow["{\text{on}}"', from=1-3, to=2-1]
      \arrow["{\text{at}}", from=1-3, to=2-2]
      \arrow["{\text{for}}"', from=1-3, to=2-4]
      \arrow["{\text{assigned to}}", from=1-3, to=2-5]
    \end{tikzcd}.\]
However, this multi-span is just an encoding of a relation, which we choose to
present double-categorically as a quaternary relation:
    % https://q.uiver.app/?q=WzAsMixbMCwwLCJcXGZib3h7ZGF0ZX1cXHRpbWVzIFxcZmJveHtsb2NhdGlvbn0iXSxbMiwwLCJcXGZib3h7cHVycG9zZX1cXHRpbWVzXFxmYm94e3RlYW19Il0sWzAsMSwiXFx0ZXh0e21pc3Npb259IiwwLHsic3R5bGUiOnsiYm9keSI6eyJuYW1lIjoiYmFycmVkIn19fV1d
    \[\begin{tikzcd}
      {\fbox{date}\times \fbox{location}} && {\fbox{purpose}\times\fbox{team}}
      \arrow["{\text{mission}}", "\shortmid"{marking}, from=1-1, to=1-3]
    \end{tikzcd}.\]
Of course, we could have chosen to partition the four types differently, but
that would not meaningfully alter the result.
We display the instance data for the relation in the conventional style as a
multi-column table:
    \[
      \begin{tabular}{|l|l|l|l|}
        \hline\multicolumn{4}{|c|}{\bf mission} \\
        \hline {\bf date} & {\bf location} & {\bf purpose} & {\bf team} \\
        \hline 02-06-1998 & P41-771 & search \& rescue & SG3 \\
        \hline 07-31-1998 & Cimmeria & assist Cimmerians & SG1 \\
        \hline 01-02-1999 & P3R-272 & investigate inscriptions & SG1 \\
        \hline 10-22-1999 & Ne'tu & search \& rescue & SG1 \\
        \hline 08-06-2004 & Tegalus & negotiation & SG9 \\
        \hline
      \end{tabular}\;\;.
\]
We'll assume in the background that the types have been instanced with data, but
it's not necessary to say at this point what the data actually is.

Selecting columns is now simply a matter of extending the relation along
projections. For example, say we want only to remember the dates that teams went
on missions but we care neither what the location nor purpose of the missions
were. In this case, we take an extension along the projection morphisms as
shown:
    % https://q.uiver.app/?q=WzAsNCxbMCwwLCJcXGZib3h7ZGF0ZX1cXHRpbWVzIFxcZmJveHtsb2NhdGlvbn0iXSxbMiwwLCJcXGZib3h7cHVycG9zZX1cXHRpbWVzXFxmYm94e3RlYW19Il0sWzAsMSwiXFxmYm94e2RhdGV9Il0sWzIsMSwiXFxmYm94e3RlYW19Il0sWzAsMSwiXFx0ZXh0e21pc3Npb259IiwwLHsic3R5bGUiOnsiYm9keSI6eyJuYW1lIjoiYmFycmVkIn19fV0sWzAsMiwiXFxwaV8xIiwyXSxbMSwzLCJcXHBpXzIiXSxbMiwzLCJcXHRleHR7ZGF0ZSBhbmQgdGVhbX0iLDIseyJzdHlsZSI6eyJib2R5Ijp7Im5hbWUiOiJiYXJyZWQifX19XSxbNCw3LCJcXHRleHR7ZXh0fSIsMSx7InNob3J0ZW4iOnsic291cmNlIjoyMCwidGFyZ2V0IjoyMH0sInN0eWxlIjp7ImJvZHkiOnsibmFtZSI6Im5vbmUifSwiaGVhZCI6eyJuYW1lIjoibm9uZSJ9fX1dXQ==
    \[\begin{tikzcd}
      {\fbox{date}\times \fbox{location}} && {\fbox{purpose}\times\fbox{team}} \\
      {\fbox{date}} && {\fbox{team}}
      \arrow[""{name=0, anchor=center, inner sep=0}, "{\text{mission}}", "\shortmid"{marking}, from=1-1, to=1-3]
      \arrow["{\pi_1}"', from=1-1, to=2-1]
      \arrow["{\pi_2}", from=1-3, to=2-3]
      \arrow[""{name=1, anchor=center, inner sep=0}, "{\text{date and team}}"', "\shortmid"{marking}, from=2-1, to=2-3]
      \arrow["{\text{ext}}"{description}, draw=none, from=0, to=1]
    \end{tikzcd}.\]
In the double category $\Rel$, the extension is computed by taking an image.
That is, a date and team are in the extension relation if, and only if, they are
related (with two other elements of some $4$-tuple) in the original relation.
In this case, we end up with a binary relation instanced by the table
    \[
      \begin{tabular}{|l|l|}
        \hline\multicolumn{2}{|c|}{\bf date and team} \\
        \hline {\bf date} & {\bf team} \\
        \hline 02-06-1998 & SG3 \\
        \hline 07-31-1998 & SG1 \\
        \hline 01-02-1999 & SG1 \\
        \hline 10-22-1999 & SG1 \\
        \hline 08-06-2004 & SG9 \\
        \hline
      \end{tabular}\;\;.
    \]
Alternatively, we might ask just for the occasions on which teams have visited 
locations disregarding the purpose of the mission. This would be accomplished 
by an extension yielding a ternary relation with accompanying table:
\[
  % https://q.uiver.app/?q=WzAsNCxbMCwwLCJcXGZib3h7ZGF0ZX1cXHRpbWVzIFxcZmJveHtsb2NhdGlvbn0iXSxbMiwwLCJcXGZib3h7cHVycG9zZX1cXHRpbWVzXFxmYm94e3RlYW19Il0sWzAsMSwiXFxmYm94e2RhdGV9XFx0aW1lc1xcZmJveHtsb2NhdGlvbn0iXSxbMiwxLCJcXGZib3h7dGVhbX0iXSxbMCwxLCJcXHRleHR7bWlzc2lvbn0iLDAseyJzdHlsZSI6eyJib2R5Ijp7Im5hbWUiOiJiYXJyZWQifX19XSxbMCwyLCIxIiwyXSxbMSwzLCJcXHBpXzIiXSxbMiwzLCJcXHRleHR7ZGF0ZSwgbG9jYXRpb24gYW5kIHRlYW19IiwyLHsic3R5bGUiOnsiYm9keSI6eyJuYW1lIjoiYmFycmVkIn19fV0sWzQsNywiXFx0ZXh0e2V4dH0iLDEseyJzaG9ydGVuIjp7InNvdXJjZSI6MjAsInRhcmdldCI6MjB9LCJzdHlsZSI6eyJib2R5Ijp7Im5hbWUiOiJub25lIn0sImhlYWQiOnsibmFtZSI6Im5vbmUifX19XV0=
  \begin{tikzcd}
    {\fbox{date}\times \fbox{location}} && {\fbox{purpose}\times\fbox{team}} \\
    {\fbox{date}\times\fbox{location}} && {\fbox{team}}
    \arrow[""{name=0, anchor=center, inner sep=0}, "{\text{mission}}", "\shortmid"{marking}, from=1-1, to=1-3]
    \arrow["1"', from=1-1, to=2-1]
    \arrow["{\pi_2}", from=1-3, to=2-3]
    \arrow[""{name=1, anchor=center, inner sep=0}, "{\text{date, location and team}}"', "\shortmid"{marking}, from=2-1, to=2-3]
    \arrow["{\text{ext}}"{description}, draw=none, from=0, to=1]
  \end{tikzcd}
  \qquad\leadsto\qquad
  \begin{tabular}{|l|l|l|}
    \hline\multicolumn{3}{|c|}{\bf date, location and team} \\
    \hline {\bf date} & {\bf location} & {\bf team} \\
    \hline 02-06-1998 & P41-771 & SG3 \\
    \hline 07-31-1998 & Cimmeria & SG1 \\
    \hline 01-02-1999 & P3R-272 & SG1 \\
    \hline 10-22-1999 & Ne'tu & SG1 \\
    \hline 08-06-2004 & Tegalus & SG9 \\
    \hline
  \end{tabular}\;\;.
\]
Finally, we might want to know only the date on which a location was visited.
This would be an extension using the terminal object on one side:
\[
  % https://q.uiver.app/?q=WzAsNCxbMCwwLCJcXGZib3h7ZGF0ZX1cXHRpbWVzIFxcZmJveHtsb2NhdGlvbn0iXSxbMiwwLCJcXGZib3h7cHVycG9zZX1cXHRpbWVzXFxmYm94e3RlYW19Il0sWzAsMSwiXFxmYm94e2RhdGV9XFx0aW1lc1xcZmJveHtsb2NhdGlvbn0iXSxbMiwxLCIxIl0sWzAsMSwiXFx0ZXh0e21pc3Npb259IiwwLHsic3R5bGUiOnsiYm9keSI6eyJuYW1lIjoiYmFycmVkIn19fV0sWzAsMiwiMSIsMl0sWzEsMywiMSJdLFsyLDMsIlxcdGV4dHtkYXRlIGFuZCBsb2NhdGlvbn0iLDIseyJzdHlsZSI6eyJib2R5Ijp7Im5hbWUiOiJiYXJyZWQifX19XSxbNCw3LCJcXHRleHR7ZXh0fSIsMSx7InNob3J0ZW4iOnsic291cmNlIjoyMCwidGFyZ2V0IjoyMH0sInN0eWxlIjp7ImJvZHkiOnsibmFtZSI6Im5vbmUifSwiaGVhZCI6eyJuYW1lIjoibm9uZSJ9fX1dXQ==
  \begin{tikzcd}
    {\fbox{date}\times \fbox{location}} && {\fbox{purpose}\times\fbox{team}} \\
    {\fbox{date}\times\fbox{location}} && 1
    \arrow[""{name=0, anchor=center, inner sep=0}, "{\text{mission}}", "\shortmid"{marking}, from=1-1, to=1-3]
    \arrow["1"', from=1-1, to=2-1]
    \arrow["!", from=1-3, to=2-3]
    \arrow[""{name=1, anchor=center, inner sep=0}, "{\text{date and location}}"', "\shortmid"{marking}, from=2-1, to=2-3]
    \arrow["{\text{ext}}"{description}, draw=none, from=0, to=1]
  \end{tikzcd}
  \qquad\leadsto\qquad
  \begin{tabular}{|l|l|}
    \hline\multicolumn{2}{|c|}{\bf date and location} \\
    \hline {\bf date} & {\bf location} \\
    \hline 02-06-1998 & P41-771 \\
    \hline 07-31-1998 & Cimmeria \\
    \hline 01-02-1999 & P3R-272 \\
    \hline 10-22-1999 & Ne'tu \\
    \hline 08-06-2004 & Tegalus \\
    \hline
  \end{tabular}\;\;.
\]
We take these examples to be sufficient evidence that the select operation
can be performed by extending along projection morphisms. The point is simply to
use whichever projections return the desired columns.

\subsection{Filter}

The operation of \emph{filtering} data asks for rows that satisfy a certain
property, in the simplest case that a particular attribute has a specific value.
This is one of the last topics of \cite{spivak2012fdm}, treated in \S 5.3 in an
example using adjoint functors arising from slices of copresheaf toposes. In a
double olog, filtering can be performed using restrictions provided we allow
ourselves types in the olog isolating the attribute value that we're looking
for.

Suppose we wish to filter the previous data for just the missions assigned to
the flagship team SG1. As explained in \cref{sec:facts}, we can include the
individual team SG1 in our olog in two different ways. First, we could add an
arrow
  % https://q.uiver.app/?q=WzAsMixbMSwwLCJcXGZib3h7dGVhbX0iXSxbMCwwLCJcXGZib3h7U0cxfSJdLFsxLDAsIlxcdGV4dHtpcyBhfSJdXQ==
  \[\begin{tikzcd}
    {\fbox{SG1}} & {\fbox{team}}
    \arrow["{\text{is a}}", from=1-1, to=1-2]
  \end{tikzcd}.\]
On the other hand, we could stipulate that SG1 is rather the name of a global 
element of the type, namely, an arrow from the terminal
  % https://q.uiver.app/?q=WzAsMixbMSwwLCJcXGZib3h7dGVhbX0iXSxbMCwwLCIxIl0sWzEsMCwiXFx0ZXh0e1NHMX0iXV0=
  \[\begin{tikzcd}
    1 & {\fbox{team}}
    \arrow["{\text{SG1}}", from=1-1, to=1-2]
  \end{tikzcd}.\]
Both approaches suffice to filter the data but return slightly
different arrangements. In the former case, we take the restriction
  % https://q.uiver.app/?q=WzAsNCxbMCwxLCJcXGZib3h7ZGF0ZX1cXHRpbWVzIFxcZmJveHtsb2NhdGlvbn0iXSxbMiwxLCJcXGZib3h7cHVycG9zZX1cXHRpbWVzXFxmYm94e3RlYW19Il0sWzAsMCwiXFxmYm94e2RhdGV9XFx0aW1lcyBcXGZib3h7bG9jYXRpb259Il0sWzIsMCwiXFxmYm94e3B1cnBvc2V9XFx0aW1lc1xcZmJveHtTRzF9Il0sWzAsMSwiXFx0ZXh0e21pc3Npb259IiwyLHsic3R5bGUiOnsiYm9keSI6eyJuYW1lIjoiYmFycmVkIn19fV0sWzIsMCwiMSIsMl0sWzMsMSwiMVxcdGltZXNcXHRleHR7aXMgYX0iXSxbMiwzLCJcXHRleHR7U0cxIG1pc3Npb25zfSIsMCx7InN0eWxlIjp7ImJvZHkiOnsibmFtZSI6ImJhcnJlZCJ9fX1dLFs3LDQsIlxcdGV4dHtyZXN0cn0iLDEseyJzaG9ydGVuIjp7InNvdXJjZSI6MjAsInRhcmdldCI6MjB9LCJzdHlsZSI6eyJib2R5Ijp7Im5hbWUiOiJub25lIn0sImhlYWQiOnsibmFtZSI6Im5vbmUifX19XV0=
  \[\begin{tikzcd}
    {\fbox{date}\times \fbox{location}} && {\fbox{purpose}\times\fbox{SG1}} \\
    {\fbox{date}\times \fbox{location}} && {\fbox{purpose}\times\fbox{team}}
    \arrow[""{name=0, anchor=center, inner sep=0}, "{\text{mission}}"', "\shortmid"{marking}, from=2-1, to=2-3]
    \arrow["1"', from=1-1, to=2-1]
    \arrow["{1\times\text{is a}}", from=1-3, to=2-3]
    \arrow[""{name=1, anchor=center, inner sep=0}, "{\text{SG1 missions}}", "\shortmid"{marking}, from=1-1, to=1-3]
    \arrow["{\text{restr}}"{description}, draw=none, from=1, to=0]
  \end{tikzcd}\]
and the data returned is a subtable consisting only of certain
rows of the original \textbf{mission} table, namely
  \[
    \begin{tabular}{|l|l|l|l|}
      \hline\multicolumn{4}{|c|}{\bf SG1 missions} \\
      \hline {\bf date} & {\bf location} & {\bf purpose} & {\bf team} \\ 
      \hline 07-31-1998 & Cimmeria & assist Cimmerians & SG1 \\
      \hline 01-02-1999 & P3R-272 & investigate inscriptions & SG1 \\
      \hline 10-22-1999 & Ne'tu & search \& rescue & SG1 \\
      \hline
    \end{tabular}\;\;.
  \]
Of course, the last column on the right is superfluous since we are specifically
filtering for that particular value. This redundancy can be avoided using the
global element approach. In this case we have to be especially careful to label 
the restricted relation in the olog. That is, we \emph{define} the relation
``\text{SG1 missions}'' to be the restriction 
  % https://q.uiver.app/?q=WzAsNCxbMCwxLCJcXGZib3h7ZGF0ZX1cXHRpbWVzIFxcZmJveHtsb2NhdGlvbn0iXSxbMiwxLCJcXGZib3h7cHVycG9zZX1cXHRpbWVzXFxmYm94e3RlYW19Il0sWzAsMCwiXFxmYm94e2RhdGV9XFx0aW1lcyBcXGZib3h7bG9jYXRpb259Il0sWzIsMCwiXFxmYm94e3B1cnBvc2V9XFx0aW1lcyAxIl0sWzAsMSwiXFx0ZXh0e21pc3Npb259IiwyLHsic3R5bGUiOnsiYm9keSI6eyJuYW1lIjoiYmFycmVkIn19fV0sWzIsMCwiMSIsMl0sWzMsMSwiMVxcdGltZXNcXHRleHR7U0cxfSJdLFsyLDMsIlxcdGV4dHtTRzEgbWlzc2lvbnN9IiwwLHsic3R5bGUiOnsiYm9keSI6eyJuYW1lIjoiYmFycmVkIn19fV0sWzcsNCwiXFx0ZXh0e3Jlc3RyfSIsMSx7InNob3J0ZW4iOnsic291cmNlIjoyMCwidGFyZ2V0IjoyMH0sInN0eWxlIjp7ImJvZHkiOnsibmFtZSI6Im5vbmUifSwiaGVhZCI6eyJuYW1lIjoibm9uZSJ9fX1dXQ==
  \[\begin{tikzcd}
    {\fbox{date}\times \fbox{location}} && {\fbox{purpose}\times 1} \\
    {\fbox{date}\times \fbox{location}} && {\fbox{purpose}\times\fbox{team}}
    \arrow[""{name=0, anchor=center, inner sep=0}, "{\text{mission}}"', "\shortmid"{marking}, from=2-1, to=2-3]
    \arrow["1"', from=1-1, to=2-1]
    \arrow["{1\times\text{SG1}}", from=1-3, to=2-3]
    \arrow[""{name=1, anchor=center, inner sep=0}, "{\text{SG1 missions}}", "\shortmid"{marking}, from=1-1, to=1-3]
    \arrow["{\text{restr}}"{description}, draw=none, from=1, to=0]
  \end{tikzcd}.\]
This returns the simplified table omitting the redundant column:
  \[
    \begin{tabular}{|l|l|l|}
      \hline\multicolumn{3}{|c|}{\bf SG1 missions} \\
      \hline {\bf date} & {\bf location} & {\bf purpose} \\ 
      \hline 07-31-1998 & Cimmeria & assist Cimmerians \\
      \hline 01-02-1999 & P3R-272 & investigate inscriptions \\
      \hline 10-22-1999 & Ne'tu & search \& rescue \\
      \hline
    \end{tabular}\;\;.
  \]
If in addition we wanted to select certain columns (say we were interested
only in the dates on which SG1 visited certain places), we could use the
projection-extension method above to produce a table with only those
columns. Such a combination is effectively a \emph{select-from-where} query in
SQL, restricted to a single table. We now turn to how such queries can be
extended to multiple tables.

\subsection{Inner Joins}

The inner join operation is an example of local products, already used in
\cref{subsection:conjunction} and reviewed in
\cref{subsection:Appendixlocalproducts}. Let's take a simple olog, namely, a
fragment of the teams olog above with the two relations
    % https://q.uiver.app/?q=WzAsNCxbMCwwLCJcXGZib3h7cGVyc29ufSJdLFsyLDAsIlxcZmJveHtza2lsbH0iXSxbNCwwLCJcXGZib3h7cGVyc29ufSJdLFs2LDAsIlxcZmJveHt0ZWFtfSJdLFswLDEsIlxcdGV4dHtleHBlcnRpc2V9IiwyLHsic3R5bGUiOnsiYm9keSI6eyJuYW1lIjoiYmFycmVkIn19fV0sWzIsMywiXFx0ZXh0e21lbWJlcnNoaXB9IiwyLHsic3R5bGUiOnsiYm9keSI6eyJuYW1lIjoiYmFycmVkIn19fV1d
    \[\begin{tikzcd}
      {\fbox{person}} && {\fbox{skill}} && {\fbox{person}} && {\fbox{team}}
      \arrow["{\text{expertise}}"', "\shortmid"{marking}, from=1-1, to=1-3]
      \arrow["{\text{membership}}"', "\shortmid"{marking}, from=1-5, to=1-7]
    \end{tikzcd}.\]
These relations are intended to express that a person may have expertise
consisting of training or facility in a particular skill, and that each person
may or may not belong to a particular SGC team. Denote this double olog by $\dbl{D}$.
As instance data $\dbl{D} \to \Rel$, let us assign the following data:
    \[
        \begin{tabular}{| l |}
          \hline {\bf person}\\
          \hline Hammond\\
          \hline Kovacek\\
          \hline Maybourne\\
          \hline Morrison\\
          \hline O'Neill\\
          \hline Rothman\\
          \hline Simmons\\
          \hline Warren\\
          \hline
        \end{tabular}
        \qquad
        \begin{tabular}{| l |}
          \hline {\bf skill}\\
          \hline archaeology\\
          \hline chicanery\\
          \hline combat\\
          \hline command\\
          \hline law\\
          \hline sociopathy\\
          \hline
        \end{tabular}  
        \qquad
        \begin{tabular}{| l |}
          \hline {\bf teams}\\
          \hline SG1\\
          \hline SG3\\
          \hline SG9\\
          \hline SG11\\
          \hline
        \end{tabular}
        \qquad
%We then assign the following tables to the two relations:
        \begin{tabular}{| l | l |}
          \hline\multicolumn{2}{| c |}{\bf expertise}\\
          \hline {\bf person} & {\bf skill}\\
          \hline Hammond & command\\
          \hline Kovacek & law\\
          \hline Maybourne & chicanery\\
          \hline Morrison & combat\\
          \hline O'Neill & command\\
          \hline Rothman & archaeology\\
          \hline Simmons & sociopathy\\
          \hline Warren & combat\\
          \hline
        \end{tabular}  
        \qquad
        \begin{tabular}{| l | l |}
          \hline\multicolumn{2}{| c |}{\bf membership}\\
          \hline {\bf person} & {\bf team}\\
          \hline Kovacek & SG9\\
          \hline Morrison & SG3\\
          \hline O'Neill & SG1\\
          \hline Rothman & SG11\\
          \hline Warren & SG3\\
          \hline
        \end{tabular}
    \]
As always, we assume that this data takes the form of a cartesian strict double
functor $\dbl{D}\to \Rel$. In our example, the inner join along
the shared column $\fbox{person}$ is given by the restriction
    % https://q.uiver.app/#q=WzAsNCxbMCwxLCJcXGZib3h7cGVyc29ufVxcdGltZXNcXGZib3h7cGVyc29ufSJdLFszLDEsIlxcZmJveHtza2lsbH1cXHRpbWVzXFxmYm94e3RlYW19Il0sWzAsMCwiXFxmYm94e3BlcnNvbn0iXSxbMywwLCJcXGZib3h7c2tpbGx9XFx0aW1lc1xcZmJveHt0ZWFtfSJdLFsyLDAsIlxcRGVsdGEiLDJdLFsyLDMsIlxcdGV4dHtleHBlcnRpc2V9XFxib3d0aWVcXHRleHR7bWVtYmVyc2hpcH0iLDAseyJzdHlsZSI6eyJib2R5Ijp7Im5hbWUiOiJiYXJyZWQifX19XSxbMywxLCIxIl0sWzAsMSwiXFx0ZXh0e2V4cGVydGlzZX1cXHRpbWVzXFx0ZXh0e21lbWJlcnNoaXB9IiwyLHsic3R5bGUiOnsiYm9keSI6eyJuYW1lIjoiYmFycmVkIn19fV0sWzUsNywiXFx0ZXh0e3Jlc3RyfSIsMSx7InNob3J0ZW4iOnsic291cmNlIjoyMCwidGFyZ2V0IjoyMH0sInN0eWxlIjp7ImJvZHkiOnsibmFtZSI6Im5vbmUifSwiaGVhZCI6eyJuYW1lIjoibm9uZSJ9fX1dXQ==
    \[\begin{tikzcd}
      {\fbox{person}} &&& {\fbox{skill}\times\fbox{team}} \\
      {\fbox{person}\times\fbox{person}} &&& {\fbox{skill}\times\fbox{team}}
      \arrow["\Delta"', from=1-1, to=2-1]
      \arrow[""{name=0, anchor=center, inner sep=0}, "{\text{expertise}\bowtie\text{membership}}", "\shortmid"{marking}, from=1-1, to=1-4]
      \arrow["1", from=1-4, to=2-4]
      \arrow[""{name=1, anchor=center, inner sep=0}, "{\text{expertise}\times\text{membership}}"', "\shortmid"{marking}, from=2-1, to=2-4]
      \arrow["{\text{restr}}"{description}, draw=none, from=0, to=1]
    \end{tikzcd}.\]
In relations, the restriction is computed as a pullback of the product along 
the arrow $\Delta\times 1$. That is, form the cartesian product of all pairs from the two
relations and match the $\fbox{person}$ argument. We obtain the following table
    \[
      \begin{tabular}{|l|l|l|}
        \hline\multicolumn{3}{|c|}{\bf expertise $\bowtie$ membership} \\
        \hline {\bf person} & {\bf skill} & {\bf team} \\
        \hline Kovacek & law & SG9\\
        \hline Morrison & combat & SG3\\
        \hline O'Neill & command & SG1\\
        \hline Rothman & archaeology & SG11\\
        \hline Warren & combat & SG3\\
        \hline
      \end{tabular}
    \]
recording both the skill and team of those persons who have both listing 
without any extra rows or null values. This is precisely the inner join of the 
two tables. Notice also that this approach avoids repeating the person column
in the joined table.

\section{Conclusion}

We have shown by example how the structure of a `double category of relations'
can be used to represent knowledge in an ontology and to express queries against
data instances of the ontology. Progressing from these illustrations to a
working knowledge representation and database system will require further
mathematical and engineering work.

Theoretical questions include what should be the basic double-categorical
structure, including which notions are primitive and which are derived, and how
the structure can be extended to meet practical needs. For instance, we have so
far taken \emph{local products} (conjunction) to be a concept derived from
external products and restrictions, but in a different formulation of
double-categorical products \cite{patterson2024}, local products become
primitive operations, which is possibly more convenient. One unavoidable
practical need is to support \emph{data attributes} valued in fixed types such
as strings or calendar dates. Several approaches to data attributes have been
proposed in the literature on categorical databases
\cite{spivak2012fdm,spivak2015,schultz2017}; at least one of them should be
adapted to double-categorical databases. More ambitiously, it should be
investigated whether our functorial approach to querying can be extended to
support \emph{aggregation} \cite{spivak2021}.

Design considerations for an engineered system have not been addressed. In an
implementation, the mathematical expression of queries by double-functorial
semantics must take the more concrete form of a query language, textual or
graphical. Algorithms for evaluating queries in the language or reducing them to
SQL queries must be devised. To do the latter, one would need to be more precise
about the observation in \cref{section:introduction} that the primitive notions
of a `double category of relations' closely resemble traditional relational
algebra, in contrast to previous approaches to categorical databases. In this
way, as always, engineering challenges may spur further theoretical development.

\printbibliography[heading=bibintoc]

\appendix
\section{Background on Double Category Theory}
\label{section:background}

In this appendix, we review background material on double category theory, as
well as prove a relevant new characterization of partial maps in a `double
category of relations.' A general reference on double category theory is the
textbook by Grandis \cite{grandis2019}.

\subsection{Double Categories of Relations}

Briefly, a \textbf{`double category of relations'} \cite{lambert2022} is a 
double category that is also cartesian and an equipment and that satisfies a 
discreteness condition distinguishing the `bicategories of relations' of 
\cite{carboni1987} from mere cartesian bicategories. For simplicity we also 
assume that any such `double category of relations' is locally posetal. The 
usual double category of relations $\Rel$ is the canonical example. That a 
`double category of relations' $\mathbb D$ is cartesian means that it has 
finite products in an appropriately coherent way \cite{aleiferi2018}, namely, 
the canonical double functors $\mathbb D\to \mathbb D\times\mathbb D$ and 
$\mathbb D\to 1$ have right adjoints in the 2-category of double categories, 
(pseudo)-double functors and transformations. That a `double category of 
relations' is an equipment \cite{shulman2008} means that the canonical 
source-target projection functor $\mathbb D_1\to\mathbb D_0\times\mathbb D_0$ 
is a bifibration.

In more practical terms, the latter bifibration condition is equivalent to the 
fact that niches and coniches can be completed to proper cells 
    % https://q.uiver.app/#q=WzAsOCxbMCwwLCJBIl0sWzAsMSwiQyJdLFsxLDEsIkQiXSxbMSwwLCJCIl0sWzMsMCwiQSJdLFszLDEsIkMiXSxbNCwwLCJCIl0sWzQsMSwiRCJdLFswLDEsImYiLDJdLFsxLDIsIlMiLDIseyJzdHlsZSI6eyJib2R5Ijp7Im5hbWUiOiJiYXJyZWQifX19XSxbMywyLCJnIl0sWzQsNSwiaCIsMl0sWzQsNiwiUiIsMCx7InN0eWxlIjp7ImJvZHkiOnsibmFtZSI6ImJhcnJlZCJ9fX1dLFs2LDcsImsiXSxbMCwzLCIiLDIseyJzdHlsZSI6eyJib2R5Ijp7Im5hbWUiOiJkYXNoZWQifX19XSxbNSw3LCIiLDIseyJzdHlsZSI6eyJib2R5Ijp7Im5hbWUiOiJkYXNoZWQifX19XSxbMTQsOSwiXFxtYXRocm17cmVzfSIsMSx7InNob3J0ZW4iOnsic291cmNlIjoyMCwidGFyZ2V0IjoyMH0sInN0eWxlIjp7ImJvZHkiOnsibmFtZSI6Im5vbmUifSwiaGVhZCI6eyJuYW1lIjoibm9uZSJ9fX1dLFsxMiwxNSwiXFxtYXRocm17ZXh0fSIsMSx7InNob3J0ZW4iOnsic291cmNlIjoyMCwidGFyZ2V0IjoyMH0sInN0eWxlIjp7ImJvZHkiOnsibmFtZSI6Im5vbmUifSwiaGVhZCI6eyJuYW1lIjoibm9uZSJ9fX1dXQ==
    \[\begin{tikzcd}
      A & B && A & B \\
      C & D && C & D
      \arrow["f"', from=1-1, to=2-1]
      \arrow[""{name=0, anchor=center, inner sep=0}, "S"', "\shortmid"{marking}, from=2-1, to=2-2]
      \arrow["g", from=1-2, to=2-2]
      \arrow["h"', from=1-4, to=2-4]
      \arrow[""{name=1, anchor=center, inner sep=0}, "R", "\shortmid"{marking}, from=1-4, to=1-5]
      \arrow["k", from=1-5, to=2-5]
      \arrow[""{name=2, anchor=center, inner sep=0}, dashed, from=1-1, to=1-2]
      \arrow[""{name=3, anchor=center, inner sep=0}, dashed, from=2-4, to=2-5]
      \arrow["{\mathrm{restr}}"{description}, draw=none, from=2, to=0]
      \arrow["{\mathrm{ext}}"{description}, draw=none, from=1, to=3]
    \end{tikzcd}\]
that are (respectively) cartesian and opcartesian with respect to the 
projection functor $\mathbb D_1\to\mathbb D_0\times\mathbb D_0$. The cartesian 
cell on the left is thought of as a \emph{restriction} since in $\Rel$ it is 
given by taking a pullback of the relation $S$ along the pair $f\times g$. On 
the other hand, the opcartesian cell on the right is an \emph{extension}, since 
in $\Rel$ it would be given by taking an image. 

Such restrictions and extensions, together with the product structure, do most 
of the querying work in \cref{section:querying}. \emph{Local products} make 
sense in this context and are discussed below in 
\cref{subsection:Appendixlocalproducts}. A `double category of relations' has a 
further property, however, namely, that of satisfying a \emph{discreteness 
axiom}, which ensures that its horizontal bicategory is in fact a compact 
closed monoidal category. This gives a good notion of partial map which is 
discussed below in \cref{subsection:AppendixPartialMaps}.

A double category $\mathbb D$ has \textbf{tabulators} if the external identity 
functor $\mathbb D_0\to \mathbb D_1$ has a right adjoint $\top$. This 
associates to each proarrow an object and a universal cell that we think of as 
a kind of \emph{comprehension scheme}. Tabulators provide type 
creation in double categorical ologs by associating a type to a proarrow of
those entities which satisfy the proposition represented by the given proarrow. 

Our data tables will be given by strict, cartesian double functors 
$M\colon\mathbb D\to \Rel$ valued in the double category of relations. Strict 
means that the associators and unitors are not mere isomorphisms, but are in 
fact equalities. Pseudo, hence strict, double functors preserve restriction and 
extension cells \cite[\S 6]{shulman2008}. Strict cartesian double functors 
$M\colon \mathbb D\to \Rel$ also preserve local products, as discussed 
immediately below. These preservation properties are used through \cref
{section:querying} of the paper and those calculations support our conceit that 
querying is functorial semantics.

\subsection{Local Products}
\label{subsection:Appendixlocalproducts}

Given two proarrows, say $R\colon X\proto Y$ and 
$S\colon X\proto Y$ in a cartesian equipment $\mathbb D$, the \textbf{local product} of $R$ and $S$ is defined to be the proarrow domain of the restriction 
along the diagonals
    % https://q.uiver.app/?q=WzAsNCxbMCwxLCJYXFx0aW1lcyBYIl0sWzIsMSwiWVxcdGltZXMgWSJdLFswLDAsIlgiXSxbMiwwLCJZIl0sWzIsMCwiXFxEZWx0YSIsMl0sWzAsMSwiUlxcdGltZXMgUyIsMix7InN0eWxlIjp7ImJvZHkiOnsibmFtZSI6ImJhcnJlZCJ9fX1dLFsyLDMsIlJcXHdlZGdlIFMiLDAseyJzdHlsZSI6eyJib2R5Ijp7Im5hbWUiOiJiYXJyZWQifX19XSxbMywxLCJcXERlbHRhIl0sWzYsNSwiXFx0ZXh0e3Jlc3RyfSIsMSx7InNob3J0ZW4iOnsic291cmNlIjoyMCwidGFyZ2V0IjoyMH0sInN0eWxlIjp7ImJvZHkiOnsibmFtZSI6Im5vbmUifSwiaGVhZCI6eyJuYW1lIjoibm9uZSJ9fX1dXQ==
    \[\begin{tikzcd}
      X && Y \\
      {X\times X} && {Y\times Y}
      \arrow["\Delta"', from=1-1, to=2-1]
      \arrow[""{name=0, anchor=center, inner sep=0}, "{R\times S}"', "\shortmid"{marking}, from=2-1, to=2-3]
      \arrow[""{name=1, anchor=center, inner sep=0}, "{R\wedge S}", "\shortmid"{marking}, from=1-1, to=1-3]
      \arrow["\Delta", from=1-3, to=2-3]
      \arrow["{\text{restr}}"{description}, draw=none, from=1, to=0]
    \end{tikzcd}.\]
It computes a binary product in the hom-category $\mathbb D(X,Y)$. It's pretty easy to see that in $\Rel$, this local product is computed as a certain 
set of pairs. Namely, since it's a restriction, the resulting relation is the 
monic on the leftside of 
    % https://q.uiver.app/?q=WzAsNCxbMCwxLCJYXFx0aW1lcyBZIl0sWzIsMSwiWFxcdGltZXMgWFxcdGltZXMgWVxcdGltZXMgWSJdLFswLDAsIlJcXHdlZGdlIFMiXSxbMiwwLCJSXFx0aW1lcyBTIl0sWzIsMF0sWzAsMSwiXFxEZWx0YV9YXFx0aW1lcyBcXERlbHRhX1kiLDIseyJzdHlsZSI6eyJib2R5Ijp7Im5hbWUiOiJiYXJyZWQifX19XSxbMiwzXSxbMywxXV0=
    \[\begin{tikzcd}
      {R\wedge S} && {R\times S} \\
      {X\times Y} && {X\times X\times Y\times Y}
      \arrow[from=1-1, to=2-1]
      \arrow["{\Delta_X\times \Delta_Y}"', "\shortmid"{marking}, from=2-1, to=2-3]
      \arrow[from=1-1, to=1-3]
      \arrow[from=1-3, to=2-3]
    \end{tikzcd}\]
where we've forgotten about one of the associativity isomorphisms in the 
product on the right. In any case, $R\wedge S$ in $\Rel$ is the set of pairs
    \begin{equation} \label{equation:localproductsinRelformula}
      R\wedge S = \lbrace (x,y)\mid xRy \text{ and } xSy\rbrace.
    \end{equation}
Thus, local products can be constructed by restricting products of relations
along diagonal arrows.
Alternatively, local products can viewed as a primitive notion with their
own double-categorical universal property \mbox{\cite[Example
  6.10]{patterson2024}}, and any cartesian equipment possesses finite local
products in this sense \mbox{\cite[Corollary 8.7]{patterson2024}}.

Inner joins can be constructed by adapting local products, which involves 
matching along a diagonal. That is, given two proarrows $R\colon A\times 
B\proto C$ and $S\colon B\proto D$, we define the \textbf{inner join of $R$ and 
$S$ along $B$} to be the restriction cell
    % https://q.uiver.app/#q=WzAsNCxbMCwxLCJBXFx0aW1lcyBCXFx0aW1lcyBCIl0sWzIsMSwiQ1xcdGltZXMgRCJdLFswLDAsIkFcXHRpbWVzIEIiXSxbMiwwLCJDXFx0aW1lcyBEIl0sWzIsMCwiMVxcdGltZXMgXFxEZWx0YSIsMl0sWzIsMywiUlxcYm93dGllIFMiLDAseyJzdHlsZSI6eyJib2R5Ijp7Im5hbWUiOiJiYXJyZWQifX19XSxbMywxLCIxIl0sWzAsMSwiUlxcdGltZXMgUyIsMix7InN0eWxlIjp7ImJvZHkiOnsibmFtZSI6ImJhcnJlZCJ9fX1dLFs1LDcsIlxcdGV4dHtyZXN0cn0iLDEseyJzaG9ydGVuIjp7InNvdXJjZSI6MjAsInRhcmdldCI6MjB9LCJzdHlsZSI6eyJib2R5Ijp7Im5hbWUiOiJub25lIn0sImhlYWQiOnsibmFtZSI6Im5vbmUifX19XV0=
    \[\begin{tikzcd}
      {A\times B} && {C\times D} \\
      {A\times B\times B} && {C\times D}
      \arrow["{1\times \Delta}"', from=1-1, to=2-1]
      \arrow[""{name=0, anchor=center, inner sep=0}, "{R\bowtie S}", "\shortmid"{marking}, from=1-1, to=1-3]
      \arrow["1", from=1-3, to=2-3]
      \arrow[""{name=1, anchor=center, inner sep=0}, "{R\times S}"', "\shortmid"{marking}, from=2-1, to=2-3]
      \arrow["{\text{restr}}"{description}, draw=none, from=0, to=1]
    \end{tikzcd}.\]
Think of this as giving two tables, namely, $R$ and $S$ with entries from sets 
$A$, $B$, $C$ and $D$. The tables have entries in one column in common, namely, 
those from the set $B$. We take the cartesian product of the relations $R\times 
S$ and then pull back along the diagonal morphism $\Delta$ on $B$ asking that 
those projection values are the same while leaving the others alone.

\subsection{Partial Maps}
\label{subsection:AppendixPartialMaps}

Any ordinary relation $R\colon A\proto B$ has a reverse relation 
$R^\dagger\colon B\to A$ which exchanges the domain and codomain. Suppose that 
$R\colon A\proto B$ is a relation whose associated span 
  % https://q.uiver.app/?q=WzAsMyxbMCwwLCJSIl0sWzEsMCwiQiJdLFswLDEsIkEiXSxbMCwxLCJjIl0sWzAsMiwiZCIsMix7InN0eWxlIjp7InRhaWwiOnsibmFtZSI6Im1vbm8ifX19XV0=
  \[\begin{tikzcd}
    R & B \\
    A
    \arrow["c", from=1-1, to=1-2]
    \arrow["d"', tail, from=1-1, to=2-1]
  \end{tikzcd}\]
is a \emph{partial map} in the sense that $d$ is monic. Define a morphism 
$\epsilon\colon R\times_A R \to B$ taking $(r,s)$ in the pullback of $d$ along 
itself to $cr = cs$, where equality is implied by the fact that $d$ is 
monic. This map $\epsilon$ defines a cell
  % https://q.uiver.app/?q=WzAsNCxbMCwwLCJSXFx0aW1lc19BUiJdLFsxLDAsIkJcXHRpbWVzIEIiXSxbMSwxLCJCXFx0aW1lcyBCIl0sWzAsMSwiQiJdLFswLDEsIlxcbGFuZ2xlIGMsY1xccmFuZ2xlIl0sWzEsMiwiMSJdLFswLDMsIlxcZXBzaWxvbiIsMix7InN0eWxlIjp7ImJvZHkiOnsibmFtZSI6ImRhc2hlZCJ9fX1dLFszLDIsIlxcRGVsdGEiLDJdLFswLDIsIlxcdGV4dHsoSSl9IiwxLHsic3R5bGUiOnsiYm9keSI6eyJuYW1lIjoibm9uZSJ9LCJoZWFkIjp7Im5hbWUiOiJub25lIn19fV1d
  \[\begin{tikzcd}
    {R\times_AR} & {B\times B} \\
    B & {B\times B}
    \arrow["{\langle c,c\rangle}", from=1-1, to=1-2]
    \arrow["1", from=1-2, to=2-2]
    \arrow["\epsilon"', dashed, from=1-1, to=2-1]
    \arrow["\Delta"', from=2-1, to=2-2]
    \arrow["{\text{(I)}}"{description}, draw=none, from=1-1, to=2-2]
  \end{tikzcd}\]
which amounts to the condition that $R^\dagger\otimes R \leq \id_B$ holds in 
$\Rel$.

The converse is true. Consider any fixed relation $R\colon A\proto B$ and its 
reverse relation $R^\dagger\colon B\proto A$. Suppose that $R^\dagger\otimes R 
\leq \id_B$ holds. There is then is a cell $\epsilon \colon R^{\dagger}\otimes R 
\to B$ which is an arrow making a commutative square of the form of (I) 
above. The existence of $\epsilon$ forces $d$ to be monic. For if $r = 
(x,y)$ and $s = (x,z)$ are given elements of $R\times_A R$ (which is the same 
as supposing that $dr = ds$), we have that
  \begin{equation}
    \Delta \epsilon (r,s) = \langle c,c\rangle(r,s) = (y,z)
  \end{equation}
is a diagonal entry of $B\times B$ by (I) above meaning that $y=z$ must hold. 
Thus, the span consisting of $d$ and $c$ is a genuine partial map $A\to B$.

A general version for the necessity of the condition is valid in any unit-pure 
(the external identity functor is fully faithful) `double category of relations' 
with tabulators $\mathbb D$. Recall that in this case the horizontal bicategory 
$\mathbf H(\mathbb D)$ is then a cartesian bicategory 
\cite[Proposition 3.1]{lambert2022} and indeed a $\mathbf H(\mathbb D)$ is a 
`bicategory of relations', hence a compact closed monoidal category with 
involution on proarrows denoted by $(-)^\dagger$ \cite[Theorem 2.4]{carboni1987}. 
Recall from \cite[\S 2]{carboni1987} that a proarrow $r\colon A\proto B$ is a 
\textbf{partial map} if $r^\dagger\otimes r \leq \id_B$ holds.

\begin{theorem} \label{prop:PartialMapsAreMonic}
  If $\mathbb D$ is unit-pure and has tabulators, then $r\colon A\proto B$ is a
  partial map only if $d\colon \top r\to A$ is a monic arrow, meaning, in other
  words, that the span formed by the legs of $\top r$ is a partial map in the 
  sense that the domain arrow $d$ is monic.
\end{theorem}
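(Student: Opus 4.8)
The plan is to prove monicity of $d\colon \top r \to A$ pointwise, which lets me avoid assuming strength or the existence of kernel pairs in $\mathbb{D}_0$. So I fix an arbitrary object $X$ and two arrows $u,v\colon X\to \top r$ with $du = dv =: e\colon X\to A$, and aim to show $u=v$. Write $\tau\colon \id_{\top r}\Rightarrow r$ for the universal tabulator cell, whose legs are $d$ and $c$. Whiskering $\tau$ with $u$ and with $v$ yields two cells $\id_X\Rightarrow r$ that share the left leg $e$ but carry right legs $cu$ and $cv$ respectively. Once $cu=cv$ is established, the two whiskered cells have identical boundary, hence are equal by local posetality, and the uniqueness half of the tabulator universal property then forces $u=v$. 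Thus the whole statement reduces to the single equation $cu=cv$.

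To extract $cu=cv$ I would pass into the horizontal bicategory $\mathbf{H}(\mathbb{D})$, which by unit-purity is a `bicategory of relations' equipped with the involution $(-)^\dagger$. Using the companion $f_*$ and conjoint $f^*$ of an arrow $f$, the equipment structure rewrites the two whiskered cells as the inequalities $\id_X \leq e_*\otimes r\otimes (cu)^*$ and $\id_X\leq e_*\otimes r\otimes (cv)^*$ between endo-proarrows $X\proto X$. Applying the involution to the second, horizontally composing it with the first, and then collapsing the middle using the conjoint–companion counit $e^*\otimes e_*\leq \id_A$ together with the partial-map hypothesis $r^\dagger\otimes r\leq \id_B$, the composite telescopes down to $\id_X \leq (cv)_*\otimes (cu)^*$.

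The final step is to read this last inequality back as an equation of arrows, and this is exactly where unit-purity does the real work. The proarrow $(cv)_*\otimes (cu)^*$ is the conjoint–companion sandwich corresponding to a cell $\id_X\Rightarrow \id_B$ whose two legs are $cv$ and $cu$, and the inequality $\id_X\leq (cv)_*\otimes(cu)^*$ is precisely the assertion that such a cell exists. Since $\mathbb{D}$ is unit-pure, the external identity functor $\mathbb{D}_0\to\mathbb{D}_1$ is fully faithful, so every cell $\id_X\Rightarrow \id_B$ is the identity cell on a unique arrow and therefore has its two legs equal. Hence $cu=cv$, which completes the reduction and the proof.

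I expect the middle paragraph to be the main obstacle: one must package the two tabulator cells so that the two defining inequalities of a `double category of relations'—the conjoint–companion counit and the partial-map axiom—telescope cleanly, keeping careful track of which proarrows are companions versus conjoints and of the direction in which the involution reverses the composite. The appeal to unit-purity at the end is essential rather than cosmetic: local posetality alone guarantees at most one cell with a given boundary, but it is full faithfulness of the unit functor that upgrades the mere existence of a cell $\id_X\Rightarrow\id_B$ into the equality $cu=cv$ of its legs.
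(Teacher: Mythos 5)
Your proof is correct, and while it rests on the same two pillars as the paper's---the partial-map inequality $r^\dagger\otimes r\leq \id_B$ plus unit-purity forcing the $c$-legs to agree, followed by the uniqueness clause of the tabulator---its technical execution is genuinely different. The paper internalizes the argument at the kernel pair: it forms the pullback $\top r\times_A \top r$ in $\mathbb{D}_0$, uses the partial-map cell and unit-purity to produce a single arrow $h$ with $c\pi_1 = h = c\pi_2$, and then pairs the given $f,g$ through $\langle f,g\rangle$ to conclude $cf = cg$. You instead argue pointwise at an arbitrary $X$: whisker the tabulator counit by $u$ and $v$, translate to $\id_X\leq e_*\otimes r\otimes (cu)^*$ and $\id_X\leq e_*\otimes r\otimes (cv)^*$, dagger the second, and telescope via the counit $e^*\otimes e_*\leq \id_A$ and then $r^\dagger\otimes r\leq \id_B$ down to $\id_X\leq (cv)_*\otimes (cu)^*$, which unit-purity converts into $cu = cv$; the closing appeal to local posetality and tabulator uniqueness is then identical to the paper's. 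Your route buys two things. First, it nowhere requires the pullback $\top r\times_A\top r$ to exist in $\mathbb{D}_0$---a point the paper passes over silently (the pullback is in fact constructible under the hypotheses, e.g.\ as the tabulator of the restriction $d_*\otimes d^*$, but that itself needs the unit-purity argument you give). Second, it makes explicit the companion--conjoint computation that the paper compresses into the phrase ``using the cell $r^\dagger\otimes r\leq \id_B$ and unit-pure there is a unique morphism $h$.'' What the paper's version buys in exchange is that the mediating arrow out of the kernel pair directly mirrors both the concrete $\Rel$ discussion with the cell $\epsilon$ preceding \cref{prop:PartialMapsAreMonic} and the classical characterization of monics via trivial kernel pairs. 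One small caution: when you compose the daggered inequality with the undaggered one, the order of horizontal composition must place $e^*$ immediately to the left of $e_*$ for the counit to apply; your prose leaves this implicit, though your stated conclusion $(cv)_*\otimes (cu)^*$ shows you have the bookkeeping right.
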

\begin{proof}
  Suppose that $r$ is a partial map and suppose as given two morphisms $f,
  g\colon X\rightrightarrows \top r$ satisfying $df=dg$. Using the cell 
  $r^\dagger\otimes r\leq \id_B$ and ``unit-pure'' there is a unique morphism 
  $h$ that is equal to each of the legs in the outside of the figure on the 
  left in 
    % https://q.uiver.app/?q=WzAsOCxbMCwwLCJcXHRvcCByXFx0aW1lc19BXFx0b3AgciJdLFsxLDAsIlxcdG9wIHIiXSxbMSwxLCJBIl0sWzAsMSwiXFx0b3AgciJdLFswLDIsIkIiXSxbMiwwLCJCIl0sWzQsMCwiXFx0b3AgclxcdGltZXNfQVxcdG9wIHIiXSxbNCwyLCJCIl0sWzAsMV0sWzEsMiwiZCJdLFswLDNdLFszLDIsImQiLDJdLFswLDIsIiIsMSx7InN0eWxlIjp7Im5hbWUiOiJjb3JuZXIifX1dLFszLDQsImMiLDJdLFsxLDUsImMiXSxbNiw3LCJcXGV4aXN0cyAhIGgiLDJdXQ==
    \[\begin{tikzcd}
      {\top r\times_A\top r} & {\top r} & B && {\top r\times_A\top r} \\
      {\top r} & A \\
      B &&&& B
      \arrow[from=1-1, to=1-2]
      \arrow["d", from=1-2, to=2-2]
      \arrow[from=1-1, to=2-1]
      \arrow["d"', from=2-1, to=2-2]
      \arrow["\lrcorner"{anchor=center, pos=0.125}, draw=none, from=1-1, to=2-2]
      \arrow["c"', from=2-1, to=3-1]
      \arrow["c", from=1-2, to=1-3]
      \arrow["{\exists ! h}"', from=1-5, to=3-5]
    \end{tikzcd}\]
  in that $c\pi_1 = h = c\pi_2$ holds. Thus, $f$ and $g$ induce a pair morphism 
  $\langle f,g\rangle\colon X\to \top r\times_A\top r$, and we can calculate 
  that 
    \begin{equation}
      cf = c\pi_1\langle f,g\rangle = h\langle f,g\rangle = c\pi_2 \langle f,g\rangle = cg 
    \end{equation}
  meaning that $f = g$ must hold as a result of the uniqueness clause of the 
  universal property of the tabulator $\top r$. Therefore, $d$ is monic. 
\end{proof}

\end{document}